\DeclareMathOperator{\sgn}{sgn}
\DeclareMathOperator{\tr}{tr}
\DeclareMathOperator{\Real}{Re}
\DeclareMathOperator{\R}{\mathbb{R}}
\DeclareMathOperator{\T}{\mathbb{T}}
\DeclareMathOperator{\Z}{\mathbb{Z}}
\newtheorem{theorem}{Theorem}[section]
\newtheorem{lemma}[theorem]{Lemma}
\newtheorem{corollary}[theorem]{Corollary}
\theoremstyle{definition}
\newtheorem{definition}[theorem]{Definition}
\theoremstyle{remark}
\newtheorem{remark}[theorem]{Remark}
\numberwithin{equation}{section}
\begin{document}

\def\now{24 December 2021}

\title[\now \hfill 
Factoring minimal majorants
\hfill]
{Minimal Fourier majorants in~$L^p$}
\author
[Fournier and Vrecko]
{John J. F. Fournier$^1$
and Dean Vrecko$^2$
}
\address{$^1$Department of Mathematics\\
University of British Columbia,
Vancouver, Canada V6T 1Z2}
\email{fournier@math.ubc.ca}
\address{$^2$Department of
Emergency Medicine\\
University of British Columbia,
Vancouver, Canada V5Z 1M9}
\email{deanvrecko@alumni.ubc.ca}
\thanks{Results partially announced at the June 2001 meeting
of the Canadian Mathematical Society.}
\thanks{The research of the first author
was supported by NSERC grant 4822.}
\thanks{The research of the second author
was supported by an NSERC USRSA award.}
\subjclass[2010]{Primary {42A32, 43A15};
Secondary {47B10}.
}

\date{\now}

\begin{abstract}
Denote the
coefficients in the complex form of the  Fourier series of a
function~$f$ on the interval~$[-\pi, \pi)$ by~$\hat f(n)$.
It is known that if~$p = 2j/(2j-1)$ for some
integer~$j>0$, then for each function~$f$ in~$L^p$ there exists 
another
function $F$ in~$L^p$ that majorizes~$f$ in the sense that~$\hat
F(n) \ge |\hat f(n)|$ for all~$n$,
and for which~$\|F\|_p \le \|f\|_p$.
When~$j > 1$, the
existence proofs for such small majorants do not provide constructions of them, but
there is a unique
majorant of minimal~$L^p$ norm.
We 
modify
previous existence proofs
to say more about
the form of
that
majorant.
\end{abstract}

\maketitle

\markleft{\hfill Fournier and Vrecko\hfill \now}

\section{Introduction}
\label{sec:intro}


Thus~$\hat f(n) = (1/2\pi)\int_{-\pi}^\pi f(\theta)e^{-in\theta}\,d\theta$.
Call~$F$ a {\em majorant}
of~$f$,
and~$f$ a {\em minorant}
of~$F$,
when~$|\hat f(n)| \le \hat F(n)$
for all integers~$n$. In that case,~$\|f\|_2 \le \|F\|_2$; also, if~$j$ is
an integer greater than~$1$, then $F^j$ majorizes~$f^j$, and hence
\[(\|f\|_{2j})^{2j} = (\|f^j\|_2)^2 \le (\|F^j\|_2)^2 =
(\|F\|_{2j})^{2j}.\]
Finally,~$\|f\|_\infty \le \|F\|_\infty$ when~$F$ majorizes~$f$.

This pattern does not persist for other 
exponents.
Hardy and Littlewood~\cite{HaL}
considered the {\em upper majorant property},
asserting that there is a
constant~$U(p)$
for which
\begin{equation}
\label{eq:upper}
\|f\|_p \le U(p) \|F\|_p
\end{equation}
whenever~$F$ majorizes~$f$.
They gave an example
showing that if
this property
holds in~$L^3$ then the constant~$U(3)$
must be strictly larger than~$1$.
Later work~\cite{Bo, Ba} revealed
that the property fails
completely
for each exponent~$p$
in the
interval~$(0, \infty)$
that
is
not
even.

Here we consider the {\em lower majorant property},
also introduced in~\cite{HaL}.
It holds when there is a constant~$L(p)$ so that
each function $f$ in $L^p$
has a majorant $F$
for which
\begin{equation}
\label{eq:lowmajor}
\|F\|_p \le L(p)\|f\|_p.
\end{equation}
This is clearly true when~$p=2$ with~$L(2) = 1$,
letting~$F$ be
the {\em exact majorant}
of~$f$,
that is~$\hat F = |\hat f|$.
It also holds when~$p = 1$,
since one can factor a given function~$f$ 
as~$f_1f_2$,
where~$(\|f_1\|_2)\|f_2\|_2 = \|f\|_1$, 
form the exact majorants~$F_1$ and~$F_2$ of~$f_1$ and~$f_2$,
and then take their product~$F = F_1F_2$, which majorizes~$f$, 
and for which
\[
\|F\|_1 \le (\|F_1\|_2)\|F_2\|_2 
= (\|f_1\|_2)\|f_2\|_2 = \|f\|_1.
\]

When~$p \in (1, \infty)$,
a simple duality argument \cite[Section 3]{Bo}
shows that if~$L^p$ has the lower majorant property,
then its dual space~$L^{p'}$ has the
upper majorant
property, and~$U(p') \le L(p)$.
By the
work cited 
above, 
this can only happen
when~$p'$ is an even integer,
thus ruling out all exponents~$p$ in the
interval~$(1, \infty)$ except for~$p = 2$ and the
{\em 
special
exponents}
for which~$p=2j/(2j-1)$,
where~$j$ is an integer strictly greater
than~$1$. 
When~$1 < p < \infty$, a
less simple
duality argument~\cite{HaL} shows
that the upper majorant property for~$L^{p'}$ implies the lower
majorant property
for~$L^p$,
and that~$L(p) \le U(p')$.
So
the latter
property
holds for the special exponents,
and then~$L(p)=1$.

The
duality proof
in~\cite{HaL}
and alternatives~\cite{Ob, DGLPQ, LuP} to it
do not 
specify
a suitable majorant of a given function for these values of~$p$.
For various good reasons, those arguments covered
general
exponents~$p$, 
but
the duality
mainly
has
impact for the 
special
values of~$p$.

We exploit this hindsight by
analysing
those special cases and getting new information about
majorants with minimal~$L^{2j/(2j-1)}$ norm.
For
an exponent~$p$ in the interval~$(1, \infty)$,
the majorants of a given function 
in~$L^p$
form a
closed, convex subset.
By the uniform convexity of the~$L^p$ norm,
if
that
subset
is nonempty, then it
has a unique element of least norm.

\begin{theorem}
\label{th:trig-th}
Let~$p = 2j/(2j-1)$ for some integer~$j>1$,
and 
let~$f$ be a function in~$L^p$.
Then~$f$ has 
a majorant
with no larger~$L^p$ 
norm.
Its
minimal
majorant~$F$
in~$L^p$
factors as~$(\overline{G})^{j-1}G^j$, 
where~$G \in L^{2j}$
and~$\hat G \ge 0$.
Moreover~$\hat G$ vanishes on the set 
where~$\hat F > |\hat f|$,
and~$\hat G$
vanishes off
the support of~$\hat f$.
\end{theorem}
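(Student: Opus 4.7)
My plan is to establish existence and uniqueness of the minimal majorant by soft methods, and then to derive the factorization from a first-order optimality condition. The set $X := \{F \in L^p : \hat F(n) \ge |\hat f(n)| \text{ for every } n\}$ is convex and (using continuity of every Fourier coefficient on $L^p$) closed, and it is non-empty with $\inf_{F \in X} \|F\|_p \le \|f\|_p$ by the lower majorant property recorded in \cite{HaL, Ob, DGLPQ, LuP}. Uniform convexity of $L^p$ therefore produces a unique minimizer $F$.

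To pin down its structure, I would call an $L^p$ perturbation $H$ \emph{admissible} whenever $\hat H(n)$ is real for every $n$ and $\hat H(n) \ge 0$ for every $n$ in the active set $A := \{n : \hat F(n) = |\hat f(n)|\}$. For such $H$, $F + tH \in X$ for small $t > 0$, and differentiation under the integral (justified by convexity of $t \mapsto |F+tH|^p$ and dominated convergence) gives
\[
0 \le \left.\frac{d}{dt}\right|_{t=0^+} \|F+tH\|_p^p = p \Real \int_{-\pi}^\pi K(\theta) H(\theta)\, d\theta, \qquad K := |F|^{p-2} \overline F.
\]
Real nonnegativity of $\hat F$ translates into $\overline{F(\theta)} = F(-\theta)$, which in turn yields $\overline{K(\theta)} = K(-\theta)$ and forces $\hat K$ to be real. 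Plugging in $H = \pm e^{in\theta}$ for $n \notin A$ and $H = e^{in\theta}$ for $n \in A$ then gives $\hat K(-n) = 0$ off $A$ and $\hat K(-n) \ge 0$ on $A$.

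Setting $G := \overline K$, so that $\hat G(n) = \hat K(-n)$, immediately gives $\hat G \ge 0$ and $\supp \hat G \subseteq A$ (the vanishing of $\hat G$ where $\hat F > |\hat f|$). From $|K| = |F|^{p-1}$ one reads $\|K\|_{2j}^{2j} = \|F\|_p^p$, so $G \in L^{2j}$. On $\{F \ne 0\}$ the defining relation inverts to $F = \overline K |K|^{(2-p)/(p-1)}$; with $p = 2j/(2j-1)$ the exponent simplifies to $2(j-1)$, whence $F = G|G|^{2(j-1)} = \overline G^{j-1} G^j$, and this identity extends trivially to $\{F = 0\}$.

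Finally, for $\supp \hat G \subseteq \supp \hat f$, I would expand
\[
\hat F(n) = \sum_{\sigma} \hat G^{*(j-1)}(\sigma)\, \hat G^{*j}(n + \sigma),
\]
retain only the summand with $\sigma = (j-1) m_1$ for some fixed $m_1 \in \supp \hat G$, and bound from below by $\hat G(n) \hat G(m_1)^{2(j-1)}$. This forces $\hat F(n) > 0$ whenever $n \in \supp \hat G$; combined with $\supp \hat G \subseteq A$, one then has $|\hat f(n)| = \hat F(n) > 0$, i.e., $n \in \supp \hat f$. The most delicate step is the first-order argument, particularly the careful matching of admissibility of $H$ (reality of $\hat H$ and the one-sided sign on $A$) with the reality of $\hat K$ obtained from the symmetry $\overline{K(\theta)} = K(-\theta)$; once that is in hand, the rest is straightforward Fourier bookkeeping.
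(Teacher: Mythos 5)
Your argument is correct, but it follows what the paper calls the ``classical method'' rather than the duality route used in the paper's own proof of Theorem~\ref{th:trig-th}. The paper's main line (Sections~\ref{sec:PartMaj}--\ref{sec:SpecialInd}) never differentiates the primal functional $t\mapsto\|F+tH\|_p^p$; instead it minimizes $\|g\|_{2j}$ over the dual set $R(\hat f)$ of nonnegative-coefficient polynomials supported in $\supp\hat f$ with $\sum_n|\hat f(n)|\hat g(n)=1$, obtains $G$ (up to rescaling) as that minimizer, and reads off $F=|G|^{p'-1}\sgn(G)=(\overline G)^{j-1}G^j$ together with the slackness condition from the first-order conditions of the \emph{dual} problem. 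That route has two payoffs you forgo: it proves, rather than cites, the quantitative existence statement $\|F\|_p\le\|f\|_p$ (via H\"older plus the constant-$1$ upper majorant property of $L^{2j}$, which is exactly where the hypothesis that $p'$ is even enters), and it makes $\supp\hat G\subseteq\supp\hat f$ automatic, since every competitor in $R(\hat f)$ is supported there by construction. Your primal argument---perturbing the minimal majorant by $\pm e^{in\theta}$, invoking the Hardy--Littlewood differentiation lemma, deducing that $\hat K$ is real from $\overline{F(\theta)}=F(-\theta)$, and inverting $K=|F|^{p-2}\overline F$---is essentially the alternative derivation the paper presents in Section~\ref{sec:OtherProofs}, and your convolution lower bound $\hat F(n)\ge\hat G(n)\hat G(m_1)^{2(j-1)}$ reproduces the paper's inequality~\eqref{eq:eFFhat} for upgrading the slackness condition to $\supp\hat G\subseteq\supp\hat f$. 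So the proof is sound, with the caveat that the first assertion of the theorem is imported from the literature rather than established.

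Two small points need tightening. The blanket claim that $F+tH\in X$ for small $t>0$ for \emph{every} admissible $H$ is false as stated: with infinitely many inactive frequencies the admissible range of $t$ can shrink to $0$. This is harmless only because you in fact use just the single-frequency perturbations $H=\pm e^{in\theta}$, for which it is true; you should say so. Also, to conclude $\hat K(-n)=0$ (rather than merely a sign on each one-sided derivative) at inactive $n$ you need the genuine two-sided differentiability of $t\mapsto\|F+tH\|_p^p$ at $t=0$, which is \cite[Lemma~2]{HaL}, not just the existence of one-sided derivatives supplied by convexity; and the pairing should carry the $1/2\pi$ normalization used for the coefficients.
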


\begin{corollary}
\label{th:supports}
If~$S$ is the set of frequencies of~$f$,
then the set of frequencies of the minimal majorant of~$f$
in~$L^{2j/(2j-1)}$ is included in the algebraic sum of~$j$
copies of~$S$ and~$j-1$ copies of~$-S$.
If~$f$ is a trigonometric polynomial,
so is
its majorant of minimal norm in~$L^{2j/(2j-1)}$.
\end{corollary}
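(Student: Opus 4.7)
The plan is to read both assertions directly off the factorization $F = (\overline G)^{j-1} G^j$ supplied by Theorem~\ref{th:trig-th}, tracking spectra through multiplication and conjugation, and using that $\hat G$ is supported in $S := \supp \hat f$.

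First I would record the elementary facts that, for trigonometric polynomials, $\supp \widehat{H_1 H_2} \subseteq \supp \hat H_1 + \supp \hat H_2$ and $\supp \widehat{\overline{H}} = -\supp \hat H$. Iterating, the spectrum of $G^j$ sits in the $j$-fold algebraic sum of copies of $S$, and the spectrum of $(\overline G)^{j-1}$ in the $(j-1)$-fold sum of copies of $-S$; one further convolution yields the inclusion claimed for $\supp \hat F$. The second assertion then follows at once: if $f$ is a trigonometric polynomial then $S$ is finite, so $\hat G$ has finite support, $G$ itself is a trigonometric polynomial, and hence so is $F$.

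The one step requiring care is extending the spectral containment from the polynomial case to a general $G \in L^{2j}$ whose Fourier transform may be supported on an infinite set $S$. I would handle this by approximating $G$ in the $L^{2j}$ norm by trigonometric polynomials $G_N$ arising from Fej\'er-type summability kernels, so that each $\hat G_N$ is a pointwise nonnegative multiple of $\hat G$ and therefore supported in $S$. Then $G_N^j \to G^j$ in $L^2$ and $(\overline{G_N})^{j-1} \to (\overline G)^{j-1}$ in $L^{2j/(j-1)}$, and H\"older's inequality with $\tfrac{1}{2} + \tfrac{j-1}{2j} = \tfrac{1}{p}$ gives $(\overline{G_N})^{j-1} G_N^j \to F$ in $L^p$, whence pointwise convergence of the Fourier coefficients. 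Each approximant has spectrum in the stated algebraic sum, so the limit does as well. This approximation is the only delicate point; the rest of the argument is purely formal bookkeeping of supports under convolution.
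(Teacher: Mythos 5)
Your argument is correct and follows essentially the same route as the paper: read the support inclusion off the factorization $F=(\overline G)^{j-1}G^j$ from Theorem~\ref{th:trig-th}, verify it for polynomial $G$, and pass to the limit along polynomial approximants of $G$ whose coefficients vanish off $\supp\hat f$. Your Fej\'er-kernel construction of those approximants and the H\"older bookkeeping for the convergence of the products merely make explicit steps the paper leaves implicit.
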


The corollary is new, as is the inclusion of the support of~$\hat G$ in that of~$\hat f$.
Related
conclusions appear in~\cite{FoArxiv}, which
presents
work
that began later but ended earlier
Parts
of the
following notion
arose in
earlier
duality proofs.
\begin{definition}
\label{def:conjugate}
Let~$p = 2j/(2j -1)$ for some integer~$j > 1$.
Given a function~$f$ in~$L^p$, call
a function~$H$
in~$L^{2j}$
a~$p$-{\em conjugate} of~$f$ if it has
the following properties:

\begin{enumerate}
\item
\label{en:positive} 
$\hat H \ge 0$.

\item
\label{en:majorize}
The
product~$J$
given by~$(\overline H)^{j-1}H^j$
majorizes~$f$.

\item
\label{en:slackness}
$\hat H(n) = 0$ at all integers~$n$
where~$\hat J(n) > |\hat f(n)|$.

\item
\label{en:support} 
$\hat H = 0$ off the support of~$\hat f$.

\end{enumerate}
\end{definition}


It will be shown
later that
property~\eqref{en:support} follows from the others, and that
the only~$p$-conjugate of such a function~$f$ is the function~$G$ arising in Theorem~\ref{th:trig-th}.
Property~\eqref{en:majorize}
can be restated as the requirement that the
discrete convolution
of~$j$ copies of the
nonnegative 
function~$\hat H$
with~$j-1$ copies of the reflected
transform~$\widehat{\overline H}$ mapping~$m$
to~$\overline{\hat H(-m)}$,
which is~$\hat H(-m)$ here,
should
be no smaller than~$|\hat f|$. 

When~$f$ is a trigonometric polynomial, finding 
a~$p$-conjugate
becomes a finite-dimensional nonlinear-programming problem. 
The object function is
the~$2j$-th power of the~$L^{2j}$ norm of~$H$;
this is also
the square of the~$\ell^2$ norm of the convolution of~$j$ copies
of~$\hat H$.
The goal is to minimize
that homogeneous polynomial of degree~$2j$ in the coefficients of~$H$
subject to the
constraints~(\ref{en:positive}) and (\ref{en:majorize}).
The 
slackness condition~\eqref{en:slackness}
is then necessary, as is condition~\eqref{en:support}.

The
duality argument in~\cite{DGLPQ}
led us
to
another
description
of~$H$ or~$G$
when~$f$ is a trigonometric polynomial.
Minimize the~$2j$-th power of the~$L^{2j}$ norm of a function~$g$
subject to having
nonnegative coefficients that vanish off the support
of~$\hat f$, and
for which
\begin{equation}
\label{eq:mod-eq}
\sum_n
|\hat f(n)|\hat g(n) = 1.
\end{equation}
Then rescale~$g$ suitably to get~$G$.


We elaborate on this in
the next section,
where
we 
study the
related
notion of 
partial
majorant
for all exponents in the interval~$(1, \infty)$.
Theorem~\ref{th:trig-th} 
then follows
easily
in Section~\ref{sec:SpecialInd},
via a
certain
estimate in the special cases where~$p'$ is even.
We
explain
in Section~\ref{sec:OtherProofs}
how
the
restriction on 
the support of~$\hat G$ 
also 
follows
by earlier
methods.

%
The
classical
method
shows that if~$H$ is a~$p$-conjugate of~$f$,
then the~$L^p$ norm of~$
H^j(\overline H)^{j-1}$ is at most~$
\|f\|_p$.
In
Section~\ref{sec:complements},
we extend this
slightly to
prove the following
statements
when~$p = 2j/(2j -1)$ for some
integer~$j$ strictly greater than one.

\begin
{theorem}
\label{th:Dual Norm Inequality}
Given a function~$G$
in~$L^{2j}$
with nonnegative coefficients,
let~$F = (\overline G)^{j-1}G^j$.
Let~$f$ be any function in~$L^{p}$
with the property that~$|\hat f| \ge \hat F$ on the support of~$\hat G$.
Then~$\|f\|_{p} \ge \|F\|_{p}$.
\end
{theorem}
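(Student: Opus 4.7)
The plan is to combine a Parseval identity for $\|F\|_p$ with a single H\"older estimate, powered by a lemma that exploits $p' = 2j$ being an even integer. Since $|F| = |G|^{2j-1}$ and $p(2j-1) = 2j$, a direct computation gives $\|F\|_p^p = \|G\|_{2j}^{2j}$, so $\|F\|_p = \|G\|_{2j}^{2j-1}$ and
\[
\|F\|_p\,\|G\|_{2j} \;=\; \|G\|_{2j}^{2j} \;=\; \frac{1}{2\pi}\!\int F\,\overline G\,d\theta \;=\; \sum_n \hat F(n)\,\hat G(n),
\]
using $F\overline G = |G|^{2j}$, Parseval, and that $\hat G$ is real (since $\hat G \ge 0$).

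Next I apply the hypothesis. Because $\hat G \ge 0$ and $\hat F \le |\hat f|$ on $\supp\hat G$ (and $\hat F(n)\hat G(n)=0$ off $\supp\hat G$ trivially),
\[
\sum_n \hat F(n)\hat G(n) \;\le\; \sum_n |\hat f(n)|\,\hat G(n).
\]
To control the right side, I choose the unimodular phase $\sigma(n) = \hat f(n)/|\hat f(n)|$ on $\supp\hat f$ (and $\sigma=0$ elsewhere) and set $\hat h(n) = \sigma(n)\hat G(n)$, so that $|\hat h| \le \hat G$ pointwise and $\hat f(n)\overline{\hat h(n)} = |\hat f(n)|\hat G(n)$. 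Parseval together with H\"older's inequality on~$\T$ then yield
\[
\sum_n |\hat f(n)|\hat G(n) \;=\; \frac{1}{2\pi}\!\int f\,\overline h\,d\theta \;\le\; \|f\|_p\,\|h\|_{2j}.
\]

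The main obstacle, and the only place the special exponent enters, is a lemma: any $h$ with $|\hat h| \le \hat G$ pointwise satisfies $\|h\|_{2j} \le \|G\|_{2j}$. This is where $2j$ being an even integer is decisive:
\[
\|h\|_{2j}^{2j} = \|h^j\|_2^2 = \sum_n \bigl|(\hat h)^{*j}(n)\bigr|^2 \;\le\; \sum_n \bigl(\hat G^{*j}(n)\bigr)^2 = \|G^j\|_2^2 = \|G\|_{2j}^{2j},
\]
by the triangle inequality for the $j$-fold convolution together with $\hat G \ge 0$. Assembling the three displayed estimates gives $\|F\|_p\|G\|_{2j} \le \|f\|_p\|G\|_{2j}$, and dividing by $\|G\|_{2j}$ (the case $G\equiv 0$ being trivial since then $F \equiv 0$) yields $\|F\|_p \le \|f\|_p$. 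The weakened hypothesis that $|\hat f| \ge \hat F$ holds only on $\supp\hat G$ is enough because the Parseval identity $\|F\|_p\|G\|_{2j} = \sum_n \hat F(n)\hat G(n)$ is supported precisely on $\supp\hat G$.
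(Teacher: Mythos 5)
Your argument is correct and follows essentially the same route as the paper's: the identity $\sum_n\hat F(n)\hat G(n)=(\|G\|_{2j})^{2j}=\|F\|_p\|G\|_{2j}$, the phase-modulated minorant $h$ with $|\hat h|\le\hat G$, H\"older's inequality, and the constant-$1$ upper majorant property in $L^{2j}$ are exactly the ingredients the paper assembles via Remark~\ref{rm:VanishOnSupportOfG}. The only difference is presentational: the paper first approximates $G$ by trigonometric polynomials with nonnegative coefficients that it majorizes, so that every Parseval sum is finite, whereas you work with $G$ directly and implicitly rely on the nonnegativity of the summands (and the same polynomial approximation) to justify the termwise identities and the lemma $\|h\|_{2j}\le\|G\|_{2j}$.
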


\begin{corollary}
\label{th:minimalconjugate}
Let~$H$
be a~$p$-conjugate of a function~$f$ in~$L^p$.
Then
the minimal majorant of~$f$ in~$L^p$
is~$(\overline H)^{j-1}H^j$, and~$H$ is the only~$p$-conjugate of~$f$.
\end{corollary}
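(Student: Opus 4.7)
The plan has two parts. First, I would use Theorem~\ref{th:Dual Norm Inequality} to show that $F := (\overline{H})^{j-1} H^j$ has minimum $L^p$ norm among all majorants of $f$, hence is the minimal majorant. Second, I would deduce uniqueness of the $p$-conjugate by comparing pointwise factorizations of functions of the form $|z|^{2j-1}\sgn(z)$.

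For the first part, by property~\eqref{en:majorize} of Definition~\ref{def:conjugate}, $F$ is a majorant of $f$; and by property~\eqref{en:slackness}, $\hat F(n) = |\hat f(n)|$ on $\supp\hat H$. Let $F'$ be any majorant of $f$ in $L^p$. Then $\hat F'(n) \ge |\hat f(n)| \ge 0$ for all $n$, and on $\supp\hat H$ we have $\hat F'(n) \ge |\hat f(n)| = \hat F(n)$, so $|\hat F'| \ge \hat F$ on $\supp\hat H$. Applying Theorem~\ref{th:Dual Norm Inequality} with $H$ in the role of $G$ and $F'$ in the role of $f$ gives $\|F'\|_p \ge \|F\|_p$. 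Hence $F$ is a minimum-norm majorant; by the uniform convexity of $L^p$ noted just before Theorem~\ref{th:trig-th}, this minimum is uniquely attained, so $F$ is the minimal majorant. For uniqueness of the $p$-conjugate, if $H'$ is another one, the same argument shows $(\overline{H'})^{j-1}(H')^j$ is also the minimal majorant, forcing $(\overline{H})^{j-1}H^j = (\overline{H'})^{j-1}(H')^j$ almost everywhere. Pointwise these equal $|H|^{2j-1}\sgn(H)$ and $|H'|^{2j-1}\sgn(H')$ respectively (with $\sgn(z):=z/|z|$ at $z\ne 0$, and irrelevant at the zeros); taking absolute values yields $|H|=|H'|$ a.e., after which dividing on the common nonzero set gives $\sgn(H)=\sgn(H')$, so $H=H'$ a.e.

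The main subtlety is the setup for the first part: the complementary slackness condition~\eqref{en:slackness} in Definition~\ref{def:conjugate} is engineered precisely so that Theorem~\ref{th:Dual Norm Inequality} applies with an arbitrary majorant $F'$ of $f$ playing the role of $f$ in that theorem, not just with $f$ itself. Once this is recognised, the remainder of the argument---the uniform-convexity step and the pointwise uniqueness of $H$---is essentially immediate from the injectivity of the map $z \mapsto |z|^{2j-1}\sgn(z)$ on $\mathbb{C}$.
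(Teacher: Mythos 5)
Your proposal is correct and follows essentially the same route as the paper: apply Theorem~\ref{th:Dual Norm Inequality} with $H$ as $G$ and an arbitrary majorant of $f$ in the role of the theorem's $f$ (using the slackness condition to get $\hat F = |\hat f|$ on $\supp\hat H$), then invoke uniqueness of the minimal majorant and the invertibility of $H \mapsto |H|^{2j-1}\sgn(H)$. The paper phrases the last step by recovering $H$ as $|J|^{1/(2j-1)}\sgn(J)$, which is the same observation as your injectivity argument.
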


\begin{corollary}
\label{th:ownmajorant}
If~$H \in L^{2j}$
and~$\hat H \ge 0$,
then~$(\overline H)^{j-1}H^j$ is its own minimal majorant in~$L^{p}$.
\end{corollary}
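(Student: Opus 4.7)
The plan is to derive the corollary as a one-step application of Theorem~\ref{th:Dual Norm Inequality}. Writing $F = (\overline H)^{j-1}H^j$, the first task is to check that $F$ qualifies as a majorant of itself in~$L^p$. Since $\hat H \ge 0$ is real-valued, the reflected transform $\widehat{\overline H}(n) = \overline{\hat H(-n)} = \hat H(-n)$ is also nonnegative, and $\hat F$ is the discrete convolution of $j$ copies of $\hat H$ with $j-1$ copies of the reflected sequence $n \mapsto \hat H(-n)$. Both sequences are nonnegative, so $\hat F \ge 0$, whence $\hat F = |\hat F|$ and $F$ trivially majorizes itself. Membership in~$L^p$ is immediate from the pointwise identity $|F| = |H|^{2j-1}$, which gives $\|F\|_p^p = \|H\|_{2j}^{2j} < \infty$.

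For minimality, let $F'$ be any majorant of $F$ in~$L^p$. By definition $\hat{F'}(n) \ge |\hat F(n)| = \hat F(n)$ for every integer~$n$, which in particular forces $\hat{F'}$ to be real and nonnegative, so $|\hat{F'}| = \hat{F'} \ge \hat F$ on all of~$\Z$ and a fortiori on $\supp \hat H$. This is precisely the hypothesis of Theorem~\ref{th:Dual Norm Inequality} with $G = H$ and $f = F'$, and that theorem yields $\|F'\|_p \ge \|F\|_p$. Uniqueness of the norm-minimizer in this strictly convex problem, noted in the introduction via uniform convexity of~$L^p$, therefore identifies $F$ with the minimal majorant of itself.

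I do not anticipate a genuine obstacle: the corollary is essentially a packaging of the special case $f = F$ of Theorem~\ref{th:Dual Norm Inequality}, and the only substantive verification is the nonnegativity of~$\hat F$. The one point worth highlighting is that the support condition in Theorem~\ref{th:Dual Norm Inequality} costs nothing here, because the defining majorant inequality $\hat{F'} \ge \hat F$ holds globally on~$\Z$ rather than only on $\supp \hat H$.
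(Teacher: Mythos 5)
Your proof is correct and rests on the same mechanism as the paper's: the paper deduces the corollary from Corollary~\ref{th:minimalconjugate} by observing that $H$ is a $p$-conjugate of $(\overline H)^{j-1}H^j$, and that corollary is itself precisely the application of Theorem~\ref{th:Dual Norm Inequality} that you carry out directly, with the majorant playing the r\^ole of $f$. Your preliminary check that $\hat F \ge 0$ (so that $F$ majorizes itself) matches the convolution argument the paper uses in Lemma~\ref{th:special} and Section~\ref{sec:OtherProofs}, so there is no gap.
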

Comparing 
the supports of the coefficients of~$H$ and~$\overline{H}^{j-1}H^j$
shows
that many functions in~$L^p$ with nonnegative coefficients are not minimal majorants in~$L^p$ of anything.
Similar reasoning leads to many
cases where~$G$ is not a multiple of
the exact
majorant
of~$f$ 
because~$\hat G$ vanishes on part of the support of~$\hat f$.
Earlier numerical work had 
also
led us to
such examples.

The
functions,~$f$ say,
that belong to~$L^{2j/(2j-1)}$
are those that factor as~$(\overline k)^{j-1}k^j$
where~$k \in L^{2j}$.
It turns out that the
quantities~$\|F\|_p$ and~$\|f\|_p$
in Theorem~\ref{th:Dual Norm Inequality}
are equal
if and only if the exact majorant~$E_k$  of~$k$ also belongs
to~$L^{2j}$,
with
the same norm 
in that space
as~$k$.
Moreover,~$F =(\overline{E_k})^{j-1}(E_k)^j$
in those cases.
A
standard
thinness condition applied to
supports of
coefficients
yields
many such examples.

\begin{remark}
%

As~$p \to 1^+$ through the set of 
special 
exponents, 
Corollary~\ref{th:supports}
imposes
progressively weaker restrictions on the frequencies of the
minimal majorants of a 
trigonometric 
poynomial~$f$.
\end{remark}

\begin{remark}
Instead of imposing condition~\eqref{eq:mod-eq}
on suitable functions~$g$,
we
could,
as in~\cite{FoArxiv},
maximize the left-hand side of equation~\eqref{eq:mod-eq}
over
suitable
functions~$g$ in
a
closed
sphere
in~$L^{2j}$.
\end{remark}

\begin{remark}
One way to formulate the upper majorant property in~$L^{2j}$ is that if
two functions~$f$ and~$g$ in~$L^1$
have the property that~$|\hat f| \le |\hat g|$,
then~$\|f\|_{2j} \le \|g\|_{2j}$
in the cases
where~$\hat g \ge 0$.
Theorem~\ref{th:Dual Norm Inequality}
yields a dual version of this, namely that if~$|\hat f| \le |\hat g|$, then~$\|f\|_{2j/(2j-1)} \le \|g\|_{2j/(2j-1)}$
when~$f$ factors
as~$(\overline H)^{j-1}H^j$
where~$H \in L^{2j}$ and~$\hat H \ge 0$.
\end{remark}

\begin{remark}
Corollary~\ref{th:ownmajorant} also follows easily from part~(ii) of Lemma~2
in~\cite{Ba}.
\end{remark}

\section{Partial majorants}
\label{sec:PartMaj}

In this section, we show that 
much
of the pattern
in Theorem~\ref{th:trig-th}
persists for other values of~$p$ in the interval~$(1, \infty)$
with 
a
different
notion of majorant.
In the next section, we 
observe
that
the two notions coincide for minimal majorants when~$p'$ is even,
and
we explain
why
a 
key
estimate
holds
in those
special
cases.

Given
a
bounded
sequence~$c = (c_n)_{n=-\infty}^\infty$,
regard it
as giving the coefficients
of some~$2\pi$-periodic distribution,~$\check c$ say,
which may or may not belong to~$L^p$.

\begin{definition}
\label{def:partial}
An integrable function~$F$ is a \textit{partial majorant}
of~$\check c$
if~$\hat F(n) \ge |c(n)|$ at all indices~$n$ where~$c(n) \ne 0$.
\end{definition}

If~$\hat F \ge 0$ off the support of~$c$ too,
call~$F$ a \textit{full majorant} of~$\check c$.
When~$\check c$
is integrable,
this coincides
with the 
notion
of~$F$
being a majorant of~$\check c$.
%
The
functions
in~$L^p$
that partially majorize~$\check c$
form
a closed convex subset of~$L^p$. 
If 
it
is nonempty, 
and~$p \in (1, \infty)$,
then 
this
subset
has a unique element of minimal~$L^p$ 
norm.
As noted earlier, the
same comments
apply to the set of functions
in~$L^p$ that fully majorize~$\check c$. Since 
that
set is included in
the set
of partial majorants of~$\check c$ in~$L^p$, the minimal norm
of full majorants cannot be smaller
than the minimal norm of partial majorants.

Recall that complex function-valued functions~$F$ factor as~$|F|\sgn(F)$, where~$\sgn(F)$ vanishes off the support of~$F$.  Also, when~$F \in L^p$, where~$1 < p < \infty$,
letting
\begin{equation}
\label{eq:FormG}
G = |F|^{p/p'}\sgn(F)
= |F|^{p-1}\sgn(F)
\end{equation}
puts~$G$ in ~$L^{p'}$, and
then
\begin{equation}
\label{eq:FormF}
F = |G|^{p'/p}\sgn(G)
= |G|^{p'-1}\sgn(G).
\end{equation}


\begin{definition}
\label{def:infimum}
Given a nontrivial 
bounded
sequence~$c$, let~$R(c)$ be the set
of trigonometric polynomials~$g$
with 
nonnegative
coefficients 
that vanish off the support of~$c$,
and for which
\[
\sum_n |c(n)|\hat g(n) = 1.
\]
Given an 
exponent~$p$
in the interval~$(1, \infty)$,
let
\begin{equation}
\label{eq:Kp(c)}
K_p(c) = \inf\{\|g\|_{p'}: g \in R(c)\}.
\end{equation}
\end{definition}

\begin{lemma}
\label{th:norm}
Let~$1 < p < \infty$.
A nontrivial distribution~$\check c$ has a partial majorant
in~$L^p$ if and only
if~$K_p(c) > 0$;
the minimal~$L^p$ norm of partial majorants
of~$\check c$ is
then
equal to~$1/K_p(c)$.
The partial majorant
of
minimal~$L^p$
norm
is a rescaled copy
of~$|h|^{p'-1}\sgn(h)$ for the function~$h$ of minimal~$L^{p'}$ norm
in the closure of the set~$R(c)$ in~$L^{p'}$.
Finally,~$\hat h$
vanishes off the support of~$c$,
and on the part of
that support 
where the transform of the minimal majorant is strictly larger than~$|c|$.
\end{lemma}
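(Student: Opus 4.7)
The plan is to establish matching bounds by a duality argument. For the lower bound, let $F \in L^p$ be a partial majorant and $g \in R(c)$. Parseval's identity combined with $\hat g \ge 0$ supported on $\supp c$ gives
\[
\frac{1}{2\pi}\int F\,\bar g\,d\theta \;=\; \sum_n \hat F(n)\,\hat g(n) \;\ge\; \sum_n |c(n)|\,\hat g(n) \;=\; 1.
\]
By H\"older's inequality the left-hand side is bounded in modulus by $\|F\|_p\|g\|_{p'}$, so $\|F\|_p \ge 1/\|g\|_{p'}$ for every $g \in R(c)$, hence $\|F\|_p \ge 1/K_p(c)$. In particular, if $K_p(c) = 0$ no partial majorant exists.

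Now suppose $K_p(c) > 0$. A minimizing sequence for $\|g\|_{p'}$ over $R(c)$ is bounded in $L^{p'}$, so by reflexivity a subsequence converges weakly to some $h \in L^{p'}$ with $\|h\|_{p'} \le K_p(c)$; pointwise convergence of Fourier coefficients preserves $\hat h \ge 0$ and $\hat h = 0$ off $\supp c$, and the affine constraint $\sum|c(n)|\hat g(n) = 1$ either passes to the limit directly or is recovered by rescaling, placing $h$ in the closure of $R(c)$ with $\|h\|_{p'} = K_p(c)$. Set
\[
F \;=\; K_p(c)^{-p'}\,|h|^{p'-1}\sgn(h),
\]
and observe that $\|F\|_p = K_p(c)^{-p'}\,\|h\|_{p'}^{p'-1} = 1/K_p(c)$.

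It remains to verify that $F$ partially majorizes $\check c$. Since $\hat h$ is real, a direct computation shows $\widehat{|h|^2}(n) = \sum_k \hat h(k)\hat h(k-n)$ is invariant under $n \mapsto -n$, so $|h|^2$ and hence $|h|^{p'-2}$ are real even functions with real Fourier coefficients. Convolving these with the real sequence $\hat h$ shows that $\hat F$ is real-valued on all of $\Z$. The optimality of $h$ means that for every $g \in R(c)$ the map $s \mapsto \|(1-s)h + sg\|_{p'}^{p'}$ has a minimum at $s=0$; differentiating and rewriting in Fourier coefficients yields the variational inequality
\[
\sum_n \Real\bigl(\hat F(n)\bigr)\,\hat g(n) \;\ge\; 1 \qquad\text{for every } g \in R(c).
\]
Testing this against the trigonometric monomial $g(\theta) = e^{in_0\theta}/|c(n_0)| \in R(c)$ for each $n_0 \in \supp c$ gives $\hat F(n_0) \ge |c(n_0)|$, so $F$ is indeed a partial majorant of minimal norm.

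Uniqueness of the minimal partial majorant in $L^p$ was noted earlier, and the support conclusion for $\hat h$ is complementary slackness. Since $\hat h$ vanishes off $\supp c$ by construction, the identity
\[
\sum_{n\in\supp c}\hat F(n)\,\hat h(n) \;=\; \tfrac{1}{2\pi}\!\int F\,\bar h\,d\theta \;=\; 1 \;=\; \sum_{n\in\supp c}|c(n)|\,\hat h(n),
\]
combined with the termwise dominance $\hat F(n) \ge |c(n)| \ge 0$ and $\hat h(n)\ge 0$, forces $\hat h(n)\bigl(\hat F(n)-|c(n)|\bigr) = 0$ for each $n\in\supp c$. The main obstacle I expect is the careful passage to the weak limit in the second step: ensuring that the constraint $\sum |c(n)|\hat g(n) = 1$ is inherited by $h$ in a form that remains compatible with $K_p(c)>0$, so that $h\neq 0$ and the variational argument becomes available.
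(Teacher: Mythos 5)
Your overall strategy is the same as the paper's: the H\"older/Parseval duality bound $\|F\|_p \ge 1/K_p(c)$ for every partial majorant $F$ and every $g \in R(c)$; extraction of a norm-minimizing element $h$ of the closure of $R(c)$ in $L^{p'}$; a first-order optimality argument showing that the suitably rescaled $|h|^{p'-1}\sgn(h)$ is a partial majorant of norm exactly $1/K_p(c)$; and complementary slackness for the support statement. The cosmetic differences (weak compactness plus convexity of $R(c)$ in place of uniform convexity to produce $h$; perturbing along segments $(1-s)h+sg$ rather than along the renormalized monomials $(h+rz_n)/(1+r|c(n)|)$) are both legitimate and change nothing essential.

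The one genuine soft spot is exactly the one you flagged, and it does need repair: in the slackness step you use the identity $\sum_n |c(n)|\hat h(n)=1$, i.e., you assume the affine constraint defining $R(c)$ is inherited by the limit $h$. When the support of $c$ is infinite this does not follow from $L^{p'}$-convergence: the functional $g \mapsto \sum_n|c(n)|\hat g(n)$ is an infinite sum controlled only by the $\ell^1$ norm of the coefficients, which the $L^{p'}$ norm does not dominate, so it is neither norm- nor weakly continuous, and ``recovered by rescaling'' is unavailable because the limiting value is not known a priori to be finite and positive. Two repairs work. The paper's: when $\hat h(n)>0$, the quotient $(h+rz_n)/(1+r|c(n)|)$ remains in the closure of $R(c)$ for small \emph{negative} $r$ as well, so the one-sided derivative inequality becomes an equality and $\hat F(n)=|c(n)|$ there. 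Alternatively, keep your Parseval identity but route it through the approximating polynomials: for $g_k\in R(c)$ with $g_k\to h$ in $L^{p'}$, the nonnegative sums $\sum_n\bigl(\hat F(n)-|c(n)|\bigr)\hat g_k(n)=\frac{1}{2\pi}\int F\overline{g_k}\,d\theta-1$ tend to $\frac{1}{2\pi}\int F\overline{h}\,d\theta-1=0$, and Fatou's lemma for sums (using $\hat g_k(n)\to\hat h(n)$ pointwise) forces every term of $\sum_n\bigl(\hat F(n)-|c(n)|\bigr)\hat h(n)$ to vanish. A second, much smaller point: your argument for the realness of $\hat F$ invokes $\widehat{uv}=\hat u*\hat v$ with $u=|h|^{p'-2}$, which is not integrable enough to justify termwise when $p'$ is large; it is cleaner to observe, as the paper does, that $\hat h$ real gives $h(-\theta)=\overline{h(\theta)}$ a.e., hence $F(-\theta)=\overline{F(\theta)}$ and $\hat F$ real.
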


\begin{corollary}
\label{th:classical}
Let~$1 < p < \infty$.
When~$\check c$ has a partial majorant in~$L^p$,
let~$F$ be its minimal partial majorant in~$L^p$,
and let~$G = |F|^{p-1}\sgn(F)$.
Then~$\hat G \ge 0$, and~$\hat G$ vanishes off the support of~$c$.
Moreover,~$\hat G$ also vanishes on the part of
that support
where~$\hat F > |c|$.
\end{corollary}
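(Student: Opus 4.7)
The plan is to deduce this as a direct consequence of Lemma~\ref{th:norm}. By that lemma, the minimal partial majorant has the explicit form $F = \lambda\,|h|^{p'-1}\sgn(h)$ for some positive rescaling constant $\lambda$, where $h$ is the minimal-$L^{p'}$-norm element in the closure of $R(c)$. So my first step is to simply invoke the lemma and fix this representation of $F$.

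The next step is a direct algebraic check that $G = \lambda^{p-1} h$. Starting from $G = |F|^{p-1}\sgn(F)$, I substitute $|F| = \lambda|h|^{p'-1}$ and $\sgn(F) = \sgn(h)$ to get $G = \lambda^{p-1}|h|^{(p'-1)(p-1)}\sgn(h)$. Using $1/p + 1/p' = 1$ to rewrite $(p'-1)(p-1) = pp' - p - p' + 1 = 1$, the exponent on $|h|$ collapses to $1$, giving $G = \lambda^{p-1}|h|\sgn(h) = \lambda^{p-1}h$. Consequently $\hat G = \lambda^{p-1}\hat h$, and since $\lambda > 0$ the three assertions for $\hat G$ reduce to the corresponding assertions for $\hat h$.

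Finally I transfer those assertions. Every $g \in R(c)$ has nonnegative Fourier coefficients supported in the support of $c$; because point evaluation of a Fourier coefficient is a continuous linear functional on $L^{p'}$ when $1 < p' < \infty$, both properties pass to the $L^{p'}$-closure, hence $\hat h \geq 0$ and $\hat h$ vanishes off $\supp c$. The last clause of Lemma~\ref{th:norm} already gives the vanishing of $\hat h$ at indices in $\supp c$ where $\hat F > |c|$. Combining, $\hat G \geq 0$, $\hat G$ is supported in $\supp c$, and $\hat G$ vanishes on the subset of $\supp c$ where $\hat F > |c|$, which is exactly the corollary.

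I do not expect a real obstacle here: once Lemma~\ref{th:norm} is in hand the argument is essentially an application of the standard $L^p$--$L^{p'}$ duality involution $F \leftrightarrow G$ in~\eqref{eq:FormG}--\eqref{eq:FormF}, and the only nontrivial computation is the identity $(p-1)(p'-1) = 1$. The mild point worth stating carefully is that the conditions defining $R(c)$ (nonnegativity and support in $\supp c$ of the coefficients) are preserved under $L^{p'}$-closure, which is what legitimizes reading off the conclusions from $h$.
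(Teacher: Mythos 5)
Your proposal is correct and follows essentially the same route as the paper, which concludes its combined proof of Lemma~\ref{th:norm} and the corollary with exactly your identity, namely~$G = K_p(c)^{p/p'}h' = K_p(c)^p h$, so that the nonnegativity, support, and slackness properties of~$\hat h$ transfer directly to~$\hat G$. Your explicit verification that the defining conditions of~$R(c)$ survive passage to the~$L^{p'}$ closure is a point the paper asserts without comment, and your exponent computation~$(p-1)(p'-1)=1$ matches the paper's rescaling.
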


\begin{proof}[Proofs]
Suppose that~$\check c$ has a partial 
majorant,~$E$ say,
in~$L^p$.
For functions~$g$ in~$R(c)$,
only finitely-many of the terms in the sum~$\sum_n \hat g(n)|c(n)|$ are nontrivial,
and then
\[
\hat g(n)|c(n)|
\le \hat g(n)\hat E(n)
=
\hat g(n)\overline{\hat E(n)}.  
\]
Therefore,
\begin{equation}
\label{eq:extremal}
\begin{gathered}
\sum_n \hat g(n)|c(n)| 
\le \sum_n g(n)\overline{\hat E(n)}
\\
= \frac{1}{2\pi}\int g(\theta)\overline{E(\theta)}\,d\theta
\le \|g\|_{p'}\|E\|_p
\end{gathered}
\end{equation}
by H\"older's inequality.
The assumption that~$\sum_n\hat g(n)|c(n)| = 1$ 
yields that~$\|g\|_{p'} \ge 1/\|E\|_p$.
Hence~$K_p(c) \ge 1/\|E\|_p$;
in particular,~$K_p(c) > 0$,
as
asserted
in the lemma.

For the converse, suppose that~$K_p(c) > 0$.
The
set~$R(c)$
is nonempty since it contains the
function~$t \mapsto e^{int}/|c(n)|$
for each index~$n$ in the support of~$c$.
Because~$R(c)$
is convex, 
and~$1 < p' < \infty$,
the
closure of~$R(c)$ 
in~$L^{p'}$
then
contains
an
element,~$h$ say, of minimal~$L^{p'}$ norm.
That norm must
be~$K_p(c)$;
moreover,~$\hat h$ is nonnegative, and~$\hat h$ vanishes off the support of~$c$.

Let
\[
J
=  |h|^{p'-1}\sgn(h)
= |h|^{p'/p}\sgn(h).
\]
Then~$\|J\|_p = 1$ 
when~$\|h\|_{p'}=1$,
that is when~$K_p(c) = 1$.
We claim 
that~$J$ is a partial majorant of~$\check c$
in
this
special
case.

If so, then~$J$
must be the partial 
majorant
of minimal~$L^p$ norm,
since the
discussion
after the relations~\eqref{eq:extremal} makes~$1/K_p(c)$ a lower bound
for the~$L^p$ norms of all partial majorants of~$\check c$.
%
Since~$\hat h$
is real-valued,~$h(-\theta) = \overline{h(\theta)}$
for almost all~$\theta$. It follows
that~$J(-\theta) = \overline{J(\theta)}$
for almost all~$\theta$, and hence that~$\hat J$ is
real-valued.

Fix an integer~$n$ in the support of~$c$. 
Let~$z_n$ be the function 
mapping~$\theta$ to~$e^{in\theta}$,
and let~$\phi$
map
real numbers~$r$
to~$(\|h + rz_n\|_{p'})^{p'}$;
as in~\cite[Lemma~2]{HaL},
but with~$p$ replaced by~$p'$,
\begin{equation}
\label{eq:HLderivative}
\phi'(0) = p'\Real\hat J(n) = p'\hat J(n).
\end{equation}

If~$g \in R(c)$, and~$r>0$,
then the function~$(g + rz_n)/(1 + r|c(n)|)$
also
belongs to the set~$R(c)$.
So the quotient
\begin{equation}
\label{eq:ratio}
\frac{h+rz_n}{1+r|c(n)|}
\end{equation}
belongs to the closure of~$R(c)$ when~$r>0$.
By the minimality of~$\|h\|_{p'}$
in~$R(c)$,
the derivative
at~$r=0$
of
\[
\frac{(\|h+rz_n\|_{p'})^{p'}}{(1 + r|c(n)|)^{p'}}
\]
must be nonnegative. That derivative is equal to
\begin{equation}
\label{eq:derivative}
\frac{p'\hat J(n)[1+0]^{p'}
- (\|h+0\|_{p'})^{p'}p'[1+0]^{p'-1}|c(n)|}
{[1+0]^{2p'}}.
\end{equation}
This is nonnegative
for all~$n$  in the support of~$c$
if and only if
\[
\hat J(n) \ge |c(n)|
\]
in all those cases.
So~$J$
is indeed a partial majorant of~$\check c$.

When~$\hat h(n) > 0$, the quotient~\eqref{eq:ratio} also belongs
to the closure of the set~$R(c)$ when~$r$ is negative
and close enough to~$0$.
In those cases,
the derivative~\eqref{eq:derivative} must
be equal to~$0$, and
then~$
\hat J(n)
= |c(n)|$.
%
So
the part of
the
support
of~$c$
where~$\hat J(n) > |c(n)|$ is disjoint from the support
of~$\hat h$,
and~$\hat h$ must vanish on that part. 

If~$K_p(c)$ is positive but differs from~$1$, let~$c' =  c/K_p(c)$, and note that~$K_p(c') = 1$.
Let~$h' = K_p(c)h$; this is
the function of minimal~$L^{p'}$ norm in the set~$R(c')$.
Then~$\|h'\|_{p'} = 1$,
and the function~$J'$ that factors
as~$|h'|^{p'-1}\sgn(h')$
is the minimal
partial
majorant 
in~$L^p$
of the inverse transform of~$c'$.
Moreover~$\widehat{h'}(n) = 0$
on the part of the support of~$c'$ where~$\widehat{J'}(n) > |c'(n)|$.
Let
\[
F = K_p(c)J' = K_p(c)^{p'}J;
\]
this is the
minimal
partial
majorant of~$\check c$ in~$L^p$, and the rest of the lemma follows.
Corollary~\ref{th:classical} also
follows,
and
\[
G = K_p(c)^{p/p'}h' = K_p(c)^ph.
\qedhere
\]
\end{proof}

\begin{remark}
\label{rm:partialinstances}
The classical method was applied to some instances of partial majorants
and related notions
in~\cite[page 308]{HaL}.
It will be used
in Section~\ref{sec:OtherProofs}
to give another proof of
Corollary~\ref{th:classical}.
\end{remark}

\section{Special exponents}\label{sec:SpecialInd}

The lower majorant property
for~$L^p$
holds in 
the 
special 
cases
because
of
two
extra
things that are true
in those cases.

\begin{lemma}
\label{th:special}
Let~$p = 2j/(2j-1)$
for some 
integer~$j>1$,
and let~$c$ be a 
bounded 
sequence.
If~$\check c$ has a partial majorant in~$L^p$,
then its partial majorant of minimal~$L^p$ norm
is also a full majorant
of minimal~$L^p$ norm.
If there is a 
nontrivial
function~$f$ in~$L^p$ for which~$|c| \le |\hat f|$,
then~$K_p(c) \ge 1/\|f\|_p$.

\end{lemma}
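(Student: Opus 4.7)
The plan is to exploit the explicit description of the minimal partial majorant from Lemma~\ref{th:norm} together with the special structure available when~$p' = 2j$. This yields both assertions without much further work.

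For the first assertion, let~$h$ be the function of minimal~$L^{p'}$ norm in the closure of~$R(c)$, and recall from Lemma~\ref{th:norm} that the minimal partial majorant is~$F = K_p(c)^{p'}|h|^{p'-1}\sgn(h)$. In the special case, the factorization~$|h|^{p'-1}\sgn(h) = h^j\overline{h}^{j-1}$ applies. Because~$\hat h \ge 0$, the sequence~$\widehat{h^j} = \hat h^{*j}$ is a $j$-fold convolution of a nonnegative sequence, hence nonnegative; similarly,~$\widehat{\overline{h}^{j-1}}(n) = \hat h^{*(j-1)}(-n) \ge 0$. Their convolution, which is~$\hat F$ up to the positive constant~$K_p(c)^{p'}$, is therefore nonnegative everywhere, not merely on the support of~$c$. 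So~$F$ is a full majorant of~$\check c$, and because every full majorant is a partial majorant,~$F$ also minimizes the~$L^p$ norm over the smaller class.

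For the second assertion, fix~$g \in R(c)$, so that~$g$ is a trigonometric polynomial with~$\hat g \ge 0$ supported on~$\supp(c) \subseteq \supp(\hat f)$. The key step is to manufacture a phase-corrected trigonometric polynomial~$u$ by setting~$\hat u(n) = \overline{\sgn(\hat f(n))}\,\hat g(n)$ on~$\supp(\hat g)$ and zero elsewhere, so that~$|\hat u| = \hat g$. Expanding~$\|u\|_{2j}^{2j} = \|u^j\|_2^2$ via Plancherel and applying the triangle inequality to each coefficient of the convolution~$\hat u^{*j}$ gives~$\|u\|_{2j} \le \|g\|_{2j}$; this is exactly the upper majorant property in~$L^{2j}$, and is the only place where the evenness of~$p'$ enters. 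By construction,~$\sum_n\hat f(n)\hat u(n) = \sum_n |\hat f(n)|\hat g(n)$, while Parseval rewrites the left side as~$(1/2\pi)\int f(\theta)u(-\theta)\,d\theta$. So H\"older's inequality yields~$\sum_n |\hat f(n)|\hat g(n) \le \|f\|_p\|u\|_{2j} \le \|f\|_p\|g\|_{p'}$, and combining with the chain~$1 = \sum_n |c(n)|\hat g(n) \le \sum_n |\hat f(n)|\hat g(n)$ produces~$\|g\|_{p'} \ge 1/\|f\|_p$. Taking the infimum over~$R(c)$ gives~$K_p(c) \ge 1/\|f\|_p$.

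The main subtlety is this phase-correction device in the second part:~$f$ itself need not be a partial majorant of~$\check c$, so one cannot directly test~$g$ against~$f$ via the inequality chain at the start of the proof of Lemma~\ref{th:norm}. Replacing the unsigned sequence~$\hat g$ with a signed sequence~$\hat u$ of the same modulus, then invoking the~$L^{2j}$ bound~$\|u\|_{2j}\le\|g\|_{2j}$, is what recovers a usable duality pairing between~$f$ and~$g$.
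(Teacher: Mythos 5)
Your proof is correct and follows essentially the same route as the paper's: the first assertion via nonnegativity of~$\hat F$ coming from the coefficient convolution in the factorization~$(\overline h)^{j-1}h^j$ (the paper makes the identity~$\widehat{h^j}=\hat h^{*j}$ rigorous for general~$h\in L^{2j}$ by approximating with trigonometric polynomials having nonnegative coefficients, rather than asserting it directly), and the second via a phase-corrected minorant of~$g$ paired against~$f$ through Parseval, H\"older, and the constant-$1$ upper majorant property in~$L^{2j}$. Your two-step correction (first using~$|c|\le|\hat f|$, then the unimodular phase~$\overline{\sgn(\hat f)}$) is the same device as the paper's single factor~$\varepsilon$ with~$|\varepsilon|\le 1$.
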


\begin{proof}
For these special values of~$p$,
if~$\check c$ has a partial majorant in~$L^p$, then
the
factorization of the minimal
partial majorant~$F$
in Lemma~\ref{th:classical} takes the form
\begin{equation}
\label{eq:special}
F =
|G|^{p'-2}|G|\sgn(G)
= \left\{|G|^2\right\}^{j-1}G = \left\{\overline GG\right\}^{j-1}G
= \overline G^{j-1}G^j,
\end{equation}
where~$G \in L^{2j}$ and~$\hat G \ge 0$.
Choose 
trigonometric polynomials~$G_m$ with nonnegative coefficients
so that the sequence~$\{G_m\}_{m=1}^\infty$ converges in~$L^{p'}$ norm to~$G$.
Then the sequence~$\{(\overline{G_m})^{j-1}(G_m)^j\}_{m=1}^\infty$
converges in~$L^p$ norm to~$F$.
Since the coefficients of
each function~$G_m$
are nonnegative, the same is true for
the terms
in the sequence~$\{(\overline{G_m})^{j-1}(G_m)^j\}_{m=1}^\infty$,
and hence for
the~$L^p$-norm
limit~$F$ of that sequence.
So the partial majorant~$F$
is a full majorant of~$\check c$.
Its~$L^p$ norm 
is
minimal, since~$\|F\|_p$ is minimal among norms of partial majorants of~$\check c$.


Suppose 
that~$|c| \le |\hat f|$ 
for some~$f$ in~$L^p$;
then there is sequence~$(\varepsilon(n))$ of numbers
of absolute-value at most~$1$
for which~$|c(n)|
= \varepsilon(n)\overline{\hat f(n)}$
for all indices~$n$.
Let~$g \in R(c)$, and let~$k$ be the trigonometric
polynomial
for which~$\hat k(n) = \varepsilon(n)\hat g(n)$
for all~$n$.
Much as
in the relations~\eqref{eq:extremal},
\begin{equation}
\label{eq:Switch}
\begin{gathered}
\sum_n \hat g(n)|c(n)|
= \sum_n \hat g(n)\varepsilon(n)\overline{\hat f(n)}\\
= \sum_n \hat k(n)\overline{\hat f(n)}
= \frac{1}{2\pi}\int_{-\pi}^\pi
k(\theta)\overline{f(\theta)}\,d\theta.
\end{gathered}
\end{equation}
By H\"older's inequality
and the upper majorant property 
with constant~$1$
in~$L^{2j}$,
\begin{equation}
\label{eq:minorant}
\left|\frac{1}{2\pi}\int_{-\pi}^\pi
k(\theta)\overline{f(\theta)}\,d\theta\right|
\le \|k\|_{2j}\|f\|_p \le \|g\|_{2j}\|f\|_p.
\end{equation}
The
assumption that~$\sum_n\hat g(n)|c(n)| = 1$
yields that~$\|g\|_{2j} \ge 1/\|f\|_p$
for all functions~$g$
in~$R(c)$,
and that~$K_p(c) \ge 1/\|f\|_p$.
\end{proof}

\begin{proof}[Proof of Theorem \ref{th:trig-th}]
Let~$p = 2j/(2j-1)$, and let~$f$ be a nontrivial function in~$L^p$.
By the second part of the lemma above,
\[
K_p\left(\hat f\right) \ge \frac{1}{\|f\|_p} > 0.
\]
By
the first part of
Lemma~\ref{th:norm}, the function~$f$ has a partial majorant
in~$L^p$, and its partial majorant~$F$ of minimal~$L^p$ norm
satisfies the condition that
\begin{equation}
\label{eq:Norm}
\|F\|_p
=
\frac{1}{K_p
(\hat f
)} \le \|f\|_p.
\end{equation}
By Lemma~\ref{th:special}, the minimal partial majorant~$F$
is also a full majorant,
with
minimal~$L^p$ norm, 
of~$f$.
\end{proof}

\begin{proof}[Proof of Corollary~\ref{th:supports}]
The conclusion is evident if~$G$ is
a trigonometric polynomial. In general,~$G$
is again a limit
in~$L^{2j}$ norm
of a sequence~$(G_m)$
of polynomials
whose coefficients vanish off the support of~$\hat f$.
The
frequencies of~$(\overline G)^{j-1}G^j$
have
the desired property,
because
the frequencies of
the terms in
the sequence~$\left\{(\overline{G_m})^{j-1}(G_m)^j\right\}_{m=1}^\infty$ 
do.
\end{proof}

\begin{remark}
\label{rm:VanishOnSupportOfG}
Since
the products in the sums~\eqref{eq:Switch}
all vanish off
the support of~$\hat g$,
requiring that~$|c(n)| = \varepsilon(n)
\overline{\hat f(n)}$
and~$\hat k(n) = \varepsilon(n)\hat g(n)$,
where~$|\varepsilon(n)| \le 1$,
is only needed
when~$\hat g(n) \ne 0$.
The conclusion
that
\begin{equation}
\label{eq:KeyInequality}
\sum_n \hat g(n)|c(n)|
\le \|g\|_{2j}\|f\|_p
\end{equation}
therefore
follows from the weaker hypothesis
that~$|c| \le |\hat f|$ 
on the support of~$\hat g$.
Moreover, the assumption that~$\|g\|_{2j} = 1$
was not used for this.
These observations will be useful
in
the first part of
Section~\ref{sec:complements}.
\end{remark}

\begin{remark}
\label{rm:NoAccident}
%
The cases where~$p'=2j$ 
for a positive integer~$j$
are the only ones~\cite{Ru}
where the function~$\sgn(G)|G|^{p'-1}$ must have nonnegative coefficients
if~$G$ does. Hence the part of Lemma~\ref{th:special}
making~$\hat F$ nonnegative fails when~$p'$ is not even.
So does the other part, using the upper majorant property
for~$L^{p'}$ to get a strictly positive lower bound for~$K_p(\hat f)$.
This is no accident, since one way~\cite{Shap, LeS, Ra} to disprove the upper
majorant property when~$p'$ is not even is to use a function~$g$ with nonnegative coefficients for which~$\sgn(g)|g|^{p'-1}$ has a negative coefficient.
\end{remark}

\begin{remark}
\label{rm:polynomial}
Proving Lemma~\ref{th:special} only requires Lemma~\ref{th:norm} in the cases where~$p'$ is even.
The
proof
of the latter
lemma
is
more elementary
in those cases,
because the function~$\phi$
in formula~\eqref{eq:HLderivative}
is
then
a
polynomial.
\end{remark}

\section{Previous proofs
}
\label{sec:OtherProofs}

The
classical version~\cite{HaL} of 
Corollary~\ref{th:classical}
applies to full majorants of minimal norm. It does not include the statement that~$\hat G \equiv 0$ off the support of~$c$, but it asserts when~$\check c \in L^p$ that
\begin{equation}
\label{eq:slack}
\hat G(n) = 0 \text{ if } \hat F(n) > |c(n)|
\end{equation}
whether or not~$n$ lies in the support of~$c$.

We now
show how
the conclusion
in Theorem\ref{th:trig-th}
that~$\hat G \equiv 0$
off the support of~$c$ follows from condition~\eqref{eq:slack}
in the special cases
where~$p'$ is even.
In
nontrivial
cases,
view the coefficients of~$F$ as being
given by the convolution of the coefficients of~$G$ with those
of~$(|G|^2)^{j-1}$.
Now~$\widehat{\overline G}(n) = \overline{\hat G(-n)} = \hat G(-n)$
for all~$n$. So the coefficients of~$\overline G$ are all nonnegative,
and the same is true for the coefficients  of~$|G|^2$, which come from
convolving the coefficient sequences of~$G$ and~$\overline G$.
Moreover,
\[(|G|^2)\widehat{\rm\ \ }(0)
= \sum_n \hat G(n)
\widehat{
\overline G
}
(-n)
= \sum_n \hat G(n)^2 > 0.\]
By induction on~$j$,
the coefficients
of~$|G|^{2j-2}$
are all
nonnegative,
and its~$0$-th coefficient is {strictly} positive. Therefore
\begin{equation}
\label{eq:eFFhat}
\hat F(n) = \sum_m \hat G(n-m)(|G|^{2j-2})\widehat{\rm\ \ }(m)
\ge \hat G(n)\widehat{|G|^{2j-2}}
(0),
\end{equation}
and the last expression is strictly positive if~$\hat G(n) > 0$.
Hence the support of~$\hat G$ is included in the support of~$\hat F$.

The support of~$\hat G$
is
also included in the support of~$c$
if~$\hat G$
vanishes
on the part of the support of~$\hat F$
lying outside the support of~$c$.
If~$n$ belongs to
that part,
then~$\hat F(n) > 0 = |c(n)|$.
Since condition~\eqref{eq:slack}
makes~$\hat G(n) = 0$ in that case,
the support
of~$\hat G$ is
indeed
included in the
support of~$c$.

Another way to reach this conclusion is to prove
Corollary~\ref{th:classical}
by the classical method,
as was done in special cases
in~\cite{HaL}.
To that end, let~$p\in(1,~ \infty)$,
let~$F$ be a minimal partial majorant of~$\check c$ in~$L^p$,
and let~$G = |F|^{p-1}\sgn(F)$.
%
%
%
The
function~$\tilde F$ mapping~$\theta$ to~$\overline{F(-\theta)}$ has the same~$L^{p}$ norm as~$F$, and~$\hat{\tilde F} = \overline{\hat F}$.
So~$\tilde F$ is also a minimal partial majorant of~$\check c$ in~$L^{p}$,
and must
therefore
coincide almost everywhere with~$F$ 
by the minimality of~$\|F\|_p$.
It follows that~$\tilde G$ coincides almost everywhere with~$G$, and hence that~$\hat G$ is real-valued too.

Fix an integer~$n$
outside
the support of the sequence~$c$.
When~$r$ is real, let~$\psi(r) = (\|F + rz_n\|_p)^p$,
where~$z_n$ is again the function 
mapping~$\theta$ to~$e^{in\theta}$.
As in~\cite[page~311]{HaL},
the derivative~$\psi'(0)$ exists and is equal to
the real part of~$p\hat G(n)$,
which is just~$p\hat G(n)$ here.
The
functions~$F + rz_n$
are 
also
partial 
majorants of~$\check c$.
By the minimality of~$\|F\|_p$
among partial majorants,~$\psi'(0) = 0$.
Hence~$\hat G(n) = 0$ for all~$n$ outside the support of~$c$.

Argue similarly when~$n$ lies in the support of~$c$,
and~$\hat F(n) > |c(n)|$. At the remaining points~$n$
in that support,~$\hat F(n) = |c(n)|$, and
the functions~$F + rz_n$ with~$r>0$
are partial majorants of~$\check c$.
So~$\psi'(0) \ge 0$,
and~$\hat G(n) \ge 0$ at these points.

The
proof
in~\cite{DGLPQ}
was also applied
to full majorants rather than partial majorants. 
Given a nontrivial
function~$f$ in~$L^p$,
form the set,~$\widetilde{
R_{p'}}(\hat f)$ say,
of 
all functions~$g$ in~$L^{p'}$
where
the coefficients of~$g$ are nonnegative 
and
\begin{equation}
\label{eq:constant1}
\sum_n \hat g(n)|\hat f(n)| \ge 1.
\end{equation}
This 
set
is larger than the closure 
of~$R(\hat f)$
in~$L^{p'}$,
since the sum 
above 
is allowed to exceed~$1$, and there is no restriction on the support of~$\hat g$.
When~$p'$ is even, 
however,
the upper majorant property with constant~$1$ makes the minimum
value of~$\|g\|_{p'}$ on~$\widetilde{R_{p'}}(\hat f)$
the same as its minimum value on the closure of~$R(\hat f)$ in~$L^{p'}$.

The authors of~\cite{DGLPQ} use
continuous linear functionals
whose real parts separate
the set~$\widetilde{R_{2j}}(\hat f)$ from
closed balls of radius less than~$K_p(\hat f)$
about~$0$. Representing
those functionals
by integration against
functions
in~$L^p$,
and rescaling suitably,
yielded
the desired majorant for~$f$.
It is then only a short step to consider the function~$g$ that achieves the minimum norm in
the set~$\widetilde{R_{2j}}(\hat f)$,
form~$(\overline g)^{j-1}g^j$,
and proceed as we do.

\section{Complements}
\label{sec:complements}

%




We now prove Theorem~\ref{th:Dual Norm Inequality} and its
corollaries.
Then we apply them to related questions.
%
%

Because~$\|F\|_p = (\|G\|_{2j})^{2j-1}$,
the
conclusion
in the
theorem,
namely
that~$\|F\|_p \le \|f\|_p$,
follows if~$(\|G\|_{2j})^{2j-1}
\le \|f\|_p$. 
Consider the set,~$S_G$ say, of trigonometric polynomials
that are majorized by~$G$ and have only nonnegative coefficients.  Since~$G$ belongs to the closure
in~$L^{2j}$
of this set, it is enough to show that
\begin{equation}
\label{eq:Powerofnormf}
(\|g\|_{2j})^{2j-1} \le \|f\|_p
\quad
\text{for all~$g$ in~$S_G$.}
\end{equation}

For any such function~$g$,
let~$F' = (\overline g)^{j-1}g^j$.
Then~$F'$ is majorized by~$F$,
so that~$\widehat{F'} \le \hat F \le |\hat f|$
on the support of~$\hat G$, and hence on the support of~$\hat g$.
%
Let~$c = \widehat{F'}$;
then
Remark~\ref{rm:VanishOnSupportOfG} applies, and yields that
\begin{equation}
\label{eq:BoundSum}
\sum_n\widehat{F'}(n)\hat g(n)
= \sum_n |c(n)|\hat g(n)
\le \|f\|_p\|g\|_{2j}.
\end{equation}
The sum on the left above is equal to
\begin{equation}
\label{eq:ToNormPower}
\begin{gathered}
\frac{1}{2\pi}\int_{-\pi}^{\pi} F'(\theta)\overline{g(\theta)}\,d\theta
= \frac{1}{2\pi}\int_{-\pi}^{\pi}
\left\{
\left[\overline{g(\theta)}\right]^{j-1}
g(\theta)^j
\right\}
\overline{g(\theta)}
\,d\theta
\\
= \frac{1}{2\pi}\int_{-\pi}^{\pi}|g(\theta)|^{2j}\,d\theta
= (\|g\|_{2j})^{2j}.
\end{gathered}
\end{equation}
So~$(\|g\|_{2j})^{2j}
\le \|f\|_p\|g\|_{2j}$,
and inequality~\eqref{eq:Powerofnormf}
holds.

For
Corollary~\ref{th:minimalconjugate},
let~$H$ be
a~$p$-conjugate
of~$f$,
and let~$J = (\overline H)^{j-1}H^j$.
Then~$\hat J$ and~$|\hat f|$ agree on the support of~$\hat H$.
It
follows that
if~$K$ is any majorant of~$f$, then~$\hat K \ge \hat J$ on the support of~$\hat H$.  Applying
Theorem~\ref{th:Dual Norm Inequality}
with~$F$
replaced by~$J$ 
and~$f$
replaced
temporarily
by~$K$ yields that~$\|J\|_p \le \|K\|_p$.
Because~$J$
is a majorant
of the
original
function~$f$ in the corollary,
it must be the minimal one in~$L^p$.
Since~$J$ is
then
unique, so is the function~$H$ in~$L^{2j}$ obtained
by letting~$H = |J|^{1/(2j-1)}\sgn(J)$.

Corollary~\ref{th:ownmajorant} follows, because~$H$ is a~$p$-conjugate of~$(\overline H)^{j-1}H^j$.

Combining
Corollary~\ref{th:minimalconjugate} with Theorem~\ref{th:trig-th}, yields that a function~$F$ in~$L^p$
is a minimal majorant of
a
function~$f$ in that space if and only
if~$|F|^{1/(2j-1)}\sgn(F)$
is a~$p$-conjugate of~$f$.  Similarly, the functions~$F$ in~$L^p$ that are minimal majorants of something in~$L^p$ are those for which the coefficients
of~$|F|^{1/(2j-1)}\sgn(F)$
are nonnegative.

There are several ways
to see
that some functions in~$L^p$ have nonnegative coefficients,
but 
are not minimal majorants in~$L^p$ of anything.
As in Remark~\ref{rm:NoAccident} with the 
r\^oles
of~$p$ and~$p'$ exchanged, one can appeal to the fact~\cite{Ru} that if~$p$ is not an even integer,
then there exist functions,~$F$ say, in~$L^p$ with nonnegative coefficients for which some 
coefficient
of~$|F|^{1/(2j-1)}\sgn(F)$
fails
to be nonnegative.

It is simpler, however,
to consider the supports,~$S_j$ say, of the transforms of 
products~$(\overline G G)^{j-1}G$
when~$G$
is a trigonometric
polynomial for which~$\hat G \ge 0$.
If~$G$
has
at most one nonzero coefficient, then 
the same is true for~$(\overline G)^{j-1}G^j$ when~$j > 1$.
%
In the remaining cases,
since
the
support of the coefficients of~$\overline GG$
contains all differences of members of~$S_1$,
%
that support
contains~$0$
as well as some positive integer and its negative.
Then~$S_2$
includes~$S_1$,
and it
also
contains an integer to left of~$S_1$ and an integer to the right of~$S_1$.
Similarly,~$S_{j+1}$ contains at least two more points than~$S_j$.
In particular,~$S_j$
must
contain at least~$4$ integers when~$j > 1$.
It follows that any
trigonometric polynomial~$F$ with  exactly~$2$ or~$3$ nonzero coefficients 
cannot be factored in the form specified in Theorem~\ref{th:trig-th}.

This approach 
also 
provides
many
examples where the support of
the 
coefficients 
of the function~$G$ in Theorem~\ref{th:trig-th} 
is strictly smaller
than the support of~$\hat f$.
Just
let~$f$ be any exact minorant of~$(\overline G)^{j-1}G^j$,
where~$G$ is a trigonometric polynomial with nonnegative coefficients, with at least two of them 
different
from~$0$.
In particular,~$G$
cannot be a multiple of the exact majorant~$E_f$ of~$f$,
because~$\hat G$ vanishes on part of the support of~$\widehat{E_f}$.
%

We now
discuss the cases where~$\|F\|_p = \|f\|_p$
in 
Theorem~\ref{th:Dual Norm Inequality}.
They are the ones where the~$p$-conjugate~$G$ has the property that
the
inequality
\begin{equation}
\label{eq:Strict?}
(\|G\|_{2j})^{2j-1} \le \|f\|_p
\end{equation}
is not strict.
When~$f$ is a trigonometric polynomial, so is~$G$, and the relations~\eqref{eq:BoundSum}
and~\eqref{eq:ToNormPower} apply when~$g = G$ and~$F' = F$.
Matters then
reduce to determining when the inequality
\[
\sum_n |c(n)|\hat g(n)
\le \|f\|_p\|g\|_{2j}
\]
is not strict.

In the proof of that inequality,
we
used Remark~\ref{rm:VanishOnSupportOfG}.
It concerns the equations~\eqref{eq:Switch}
and the inequalities~\eqref{eq:minorant},
in which
the
sequence~$c$ is factored
on the support of~$\hat g$
as~$\varepsilon\overline{\hat f}$,
where~$|\varepsilon| \le 1$ on
that support,
and~$k$ is
the minorant of~$g$
for which~$\hat k = \varepsilon\hat g$
there.
%
The inequalities
in line~\eqref{eq:minorant}
are the instance
of H\"older's inequality where
\begin{equation}
\label{eq:RepriseHolder}
\left|\frac{1}{2\pi}\int_{-\pi}^\pi
k(\theta)\overline{f(\theta)}\,d\theta\right|
\le \|k\|_{2j}\|f\|_{2j/(2j-1)},
\end{equation}
and the
fact that
\begin{equation}
\label{eq:ReMajorant}
\|k\|_{2j} \le \|g\|_{2j}
\end{equation}
when~$|\varepsilon(n)| \le 1$ for all~$n$ in the support of~$\hat g$.  The norms of~$f$
and~$(\overline gg)^{(j-1)}g$
in~$L^{2j/(2j-1)}$ are equal if and only if equality holds in
both
inequalities displayed
just
above.

Inequality\eqref{eq:ReMajorant}
is strict
%
if the inequality~$|\varepsilon(n)| \le 1$
is strict for some~$n$ in the support of~$\hat g$.
It follows that if~$g$ and~$k$
have the same norms
in~$L^{2j}$,
then~$|\varepsilon| = 1$
on the support of~$\hat g$.
That is~$|\hat f| = |c|$ on that set, and~$k$ is an exact minorant of~$g$.

In nontrivial cases,
inequality~\eqref{eq:RepriseHolder}
is strict unless~$f$
factors as~$(\overline k)^{j-1}k^j$
multiplied by some constant,~$C$ say.
Then~$C > 0$
because
\[
\overline C\|k\|_{2j}^{2j}
=
\frac{1}{2\pi}\int_{-\pi}^\pi
k(\theta)\overline{f(\theta)}\,d\theta
= \sum \hat g(n)|c(n)| > 0.
\]
Now~$\|f\|_p = C(\|k\|_{2j})^{2j-1} = C(\|g\|_{2j})^{2j-1}$,
while~$\|F\|_p = (\|g\|_{2j})^{2j-1}$.
So~$C=1$
if~$f$ and~$F$
have the same norms 
in~$L^p$.
Then~$f$ does indeed factor
as~$(\overline k)^{j-1}k^j$
where~$k$ has the same norm in~$L^{2j}$ as its exact majorant.

For the converse,
suppose that
a trigonometric polynomial~$k$ has
the same~$L^{2j}$ norm as its exact
majorant~$g$.
Lettng~$f = (\overline k)^{j-1}k^j$,
and~$F = (\overline g)^{j-1}g^j$
then
gives~$F$
the same~$L^p$ norm as~$f$.
Now~$F$ clearly majorizes~$f$,
and
it
is
minimal
in~$L^p$
if~$g$ is a~$p$-conjugate of~$f$.
It is easy to see that~$g$ satisfies the first two conditions in Definition~\ref{def:conjugate}.
Argue
as in lines~\eqref{eq:BoundSum}
and~\eqref{eq:ToNormPower} to get that
\[
(\|g\|_{2j})^{2j} =
\sum_n \hat F(n)\hat g(n),
\quad\textrm{and}\quad
(\|f\|_{2j})^{2j} =
\sum_n \hat f(n)\overline{\hat k(n)}.
\]
Since
these
norms
are equal, so are the sums.
Since
\[
|\hat f(n)\overline{\hat k(n)}|
\le \hat F(n)\hat g(n)
\]
for all~$n$,
the two sides above are equal.
Since~$|\hat k(n)| = \hat g(n)$,
the quantities~$|\hat f(n)|$ and~$\hat F(n)$ agree on the support
of~$\hat g$.
So~$g$
satisfies the
third condition
in Definition~\ref{def:conjugate}.
It was shown
in
Section~\ref{sec:OtherProofs} that
the first three conditions
in the definition
imply
the remaining
one.

%

As in~\cite{HaL} and~\cite{Ba}, both
directions in the
reasoning above
extends
to general functions~$k$ and~$g$ in~$L^{2j}$, because
formulas like
\[
\frac{1}{2\pi}
\int_{-\pi}^\pi
k(\theta)\overline{f(\theta)}\,d\theta
= \sum_n \hat k(n)\overline{\hat f(n)}
\]
do.

Finally, we
ask when
a function,~$G$ say, in~$L^{2j}$, with nonnegative coefficients
has
the same norm in~$L^{2j}$ as all functions that it majorizes exactly.
This is equivalent to requiring
for all exact minorants~$k$ of~$G$,
that~$k^j$
have
the same norm in~$L^2$ as~$G^j$.
Since~$k^j$ is majorized by~$G^j$,
that happens if and only if the coefficients of~$k^j$ all have the same absolute value as those of~$G^j$.

The support,~$T_j$ say,
of~$\widehat{G^j}$ is the algebraic sum of~$j$ copies of the support,~$S$ say, of~$\hat G$.  
An
integer,~$n$ say,
belongs to~$T_j$
if and only if there is a function~$\alpha$ mapping~$S$ into the nonnegative integers 
for which
\begin{equation}
\label{eq:Order j}
\sum_{k \in S}\alpha(k) = j,
\quad\text{and}\quad
n = \sum_{k \in S} \alpha(k)k.
\end{equation}
As in \cite{KovTan}, call~$S$ a~$B_j$ set, or a Sidon set of order~$j$, if for each integer~$n$ 
in the set~$T_j$
there is at most one function~$\alpha$ with the properties specified above.
It is easily
to see
that
the range of a
sequence~$(\lambda_i)$
of positive integers
for which~$\lambda_{i+1} > (1+\varepsilon)\lambda_i$
for all~$i$ has this property if~$\varepsilon$ is large
enough;
more
subtle
examples were used in~\cite[\S 4]{RuT}.
As explained in the survey~\cite{O'B},
some
authors use the
term~$B_j$
set with slightly different meanings, but the one above is 
widely used, and
it
fits our situation.

A
function~$G$ in~$L^{2j}$ with nonnegative coefficients has the same norm as all its exact minorants if and only if the support of~$\widehat{G}$ is a Sidon set of order~$j$.
This is likely well known for exponential
sums.
To confirm
that
it
extends to other functions,
suppose
first that~$\hat G$ vanishes off such set.  Then, in the context of
the
formulas~\eqref{eq:Order j},
the coefficient~$\widehat{G^j}(n)$
is a sum of
a suitable number of
copies of
the
same
product
\begin{equation}
\label{PowerProduct}
\prod_{k \in S}\hat G(k)^{\alpha(k)},
\end{equation}
and similarly for~$\widehat{k^j}$
for each exact minorant~$k$ of~$G$.
Hence~$\widehat{k^j}$ and~$\widehat{G^j}$ have the same absolute values, and~$\left\|{k^j}\right\|_2 = ~\left\|{G^j}\right\|_2$.

On the other hand,
if the support of~$\hat G$ is not a Sidon set of order~$j$, then there is an integer~$n$ in the support of~$\widehat{G^j}$ for which
the
second
of the equations~\eqref{eq:Order j}
holds for more than one choice of~$\alpha$.
Then~$\widehat{G^j}(n)$
is
again
equal to a sum of 
copies of
products
of the form
displayed
in
line~\eqref{PowerProduct},
but with different~$\alpha$'s being used in some
products.
As before,~$\widehat{k^j}(n)$
is equal to a sum of similar products, 
but now
the arguments of the coefficient of~$k$ can be chosen
to create
some cancelation in that sum.
Then~$\left|\widehat{k^j}(n)\right|
< \widehat{G^j}(n)$,
and~$\|k\|_{2j} < \|G\|_{2j}$.

\begin{remark}
\label{rm:LopezRoss}

The term ``Sidon set" is also widely used
with a very different
meaning, as in~\cite{LoR} and~\cite{GrHa}.
For clarity, those
sets could
be called ``Sidon interpolation sets."
Among their many
properties is the fact that for functions whose transforms vanish 
off such a set
each~$L^p$
norm,
where~$p \in [1, \infty)$,
is equivalent to the~$L^2$ norm.  So the exact majorant of every such function in~$L^p$ also belongs
to~$L^p$,
and~$\|E_f\|_p \le C\|f\|_p$
in this case, where~$C$ depends on~$p$ and the choice
of Sidon interpolation set, but not on~$f$.
Both notions of Sidon set are discussed in~\cite[\S 4]{RuT},
but in that context the term just meant Sidon interpolation set.

\end{remark}

\bibliographystyle{amsplain}

\begin{thebibliography}{12}


\bibitem{Ba} Gregory F. Bachelis,
\textit{On the upper and lower majorant properties in~$L^{p}(G)$},
Quarterly J. Math.
\textbf{(2) 24} (1973), 119--128.



\bibitem{Bo} R.P. Boas Jr.,
\textit{Majorant problems for trigonometric series},
J. Analyse Math.
\textbf{10} (1962/1963), 253--271.





\bibitem{DGLPQ} Myriam D\'echamps-Gondim, Fran\c coise Lust-Piquard
and Herv\'e Queff\`elec,
\textit{La propri\'et\'e du majorant dans les espaces de Banach},
C. R. Acad. Sci. Paris S\'er. I Math.
\textbf{293} (1981), 117--120.





\bibitem{FoArxiv}
John J.F. Fournier,
\textit{Fourier majorants that match norms},
arXiv:2105.11642 [math.FA].





\bibitem{GrHa}
Colin C. Graham and Kathryn E. Hare,
\textit{Interpolation and Sidon sets for compact groups,} 
CMS Books in Mathematics/Ouvrages de Mathématiques de la SMC, Springer, New York, 2013. xviii+249 

\bibitem{HaL} G.H. Hardy and J.E. Littlewood,
\textit{Notes on the theory of
series (XIX); a problem concerning majorants of Fourier series},
Quarterly J. Math.
\textbf{6} (1935), 304--315.


\bibitem{KovTan}
Mladen Kova\u{c}evi\'{c}
and
Vincent Y. F. Tan
\textit{Improved bounds on Sidon sets via lattice packings of simplices},
SIAM J. Discrete Math.
\textbf{31} (2017), 2269--2278.

\bibitem{LeS} E.T.Y. Lee and Gen-ichir\^o Sunouchi,
\textit{On the majorant properties in $l\sp{p}$},
Bull. Inst. Math. Acad. Sinica \textbf{4} (1976), 327--336.


\bibitem{LoR} L\'opez, Jorge M. and Ross, Kenneth A., \textit{ Sidon sets.},
Lecture Notes in Pure and Applied Mathematics,
Vol. 13. Marcel Dekker, Inc., New York, 1975. v+193 pp.

\bibitem{LuP} Fran\c coise Lust-Piquard,
\textit{Les propri\'et\'es du majorant et du minorant
dans les espaces de Banach},
Bull. Sci. Math. \textbf{(2) 108} (1984), 51--71.




\bibitem{Ob} Daniel M. Oberlin,
\textit{The majorant problem for sequence space},
Quart. J. Math. Oxford Ser. (2)
\textbf{27} (1976),  227--240.

\bibitem{O'B} K. O'Bryant
\textit{A complete annotated bibliography of work related to sidon sequences},
Electron. J. Combin.
\textbf{11} (2004), 39 pp..



\bibitem{Ra} Michael Rains,
\textit{On the upper majorant property
for locally compact abelian groups},
Canad. J. Math. \textbf{30} (1978), 915--925.



\bibitem{Ru} Walter Rudin,
\textit{Positive definite sequences and absolute monotonic functions},
Duke J. Math \textbf{26} (1959), 617--622.

\bibitem{RuT} \bysame,
\textit{Trigonometric series with gaps},
J. Math. Mech. \textbf{9}, (1960), 203--227.

\bibitem{Shap}
Harold S. Shapiro,
\textit{Majorant problems for Fourier coefficients},
Quart. J. Math. Oxford Ser. (2) \textbf{26} (1975), 9--18.




\end{thebibliography}

\end{document}